\newtheorem{theorem}{Theorem}[section]
\newtheorem{lem}[theorem]{Lemma}
\newtheorem{prop}[theorem]{Proposition}
\newtheorem{defi}[theorem]{Definition}
\newtheorem{subsec}[theorem]{}
\renewcommand{\O}{\mathcal{O}}
\newcommand{\K}{\mathcal{K}}
\newcommand{\T}{\mathcal{T}}
\DeclareFontFamily{OT1}{pzc}{}
\DeclareFontShape{OT1}{pzc}{m}{it}{<->s*[1.10] pzcmi7t}{}
\DeclareMathAlphabet{\mathscr}{OT1}{pzc}{m}{it}
\renewcommand{\k}{\mathscr{k}}
\newcommand{\oG}{\bar{G}}
\newcommand{\End}{\operatorname{End}}
\newcommand{\og}{\bar{g}}
\newcommand{\tens}[1]{{{\otimes}_{#1}}}
\newcommand{\Ms}{M^{\ast}}
\title[Group graded endomorphism algebras]{Group graded endomorphism algebras\\and Morita equivalences}
\author[A. Marcus and V. A. Minu\cb{t}\u{a}]{Andrei Marcus {\rm and} Virgilius-Aurelian Minu\cb{t}\u{a}}
\begin{document}
\begin{abstract} We prove a  group graded Morita equivalences version of the ``butterfly theorem" on character triples. This gives a method to construct an equivalence between block extensions from another related equivalence.\\[0.1cm]
\textsc{MSC 2010.} 20C20, 20C05, 16W50, 16S35.\\[0.1cm]
\textsc{Key words.} Crossed product, block extension, group graded Morita equivalence, centralizer subalgebra.
\end{abstract}

\maketitle

\section{Introduction and preliminaries}  \label{s:Notations}

The Butterfly theorem, as stated by B. Sp\"ath in \cite[Theorem 2.16]{ch:Spath2018}, gives the possibility to construct certain relations between character triples. The result is very useful in obtaining reduction methods for the local-global conjectures in modular representation theory of finite groups. In this paper we consider group graded Morita equivalences between block extensions, and we obtain  an analogue of \cite[Theorem 2.16]{ch:Spath2018}. Our main result Theorem \ref{th:butterfly} shows how to construct a group graded Morita equivalence from a given one, under very similar assumptions to those in \cite{ch:Spath2018}.

In general, our notations and assumptions are standard and follow \cite{book:Marcus1999}. To introduce our context, let $G$ be a finite group, $N$  a normal subgroup of  $G$, and denote by $\oG$ the factor group $G/N$. Let $A=\bigoplus_{\bar{g}\in\oG} A_{\bar{g}}$ be a strongly $\oG$-graded $\O$-algebra with the identity component $B:=A_1$, where $(\K,\O,\k)$ is a $p$-modular system. For a subgroup  $\bar{H}$ of $\oG$  we denote by
$A_{\bar{H}}:=\bigoplus_{\og\in\bar{H}}A_{\og}$ the truncation of $A$ from $\oG$ to $\bar{H}$.

For the sake of simplicity, in this article we will mostly consider only crossed products, also because the generalization of the statements to the case of strongly graded algebras is a mere technicality. Recall that if $A$ is a crossed product, we can chose  an invertible homogeneous element $u_{\og}$ in the component $A_{\og}$, for all $\og\in \oG$.

Our main example for a $\bar G$-graded crossed product is obtained as follows: Regard $\O G$ as a $\oG$-graded algebra with the 1-component $\O N$. Let $b\in Z(\O N)$  be a $\oG$-invariant block idempotent. We denote:
\[ A:=b\O G, \qquad  B:=b\O N. \]
Then the block extension $A$ is a $\oG$-graded crossed product, with  1-compo\-nent $B$.

The paper is organized as follows. In Section \S\ref{s:Group_graded_Morita_equivalences} we recall from \cite{book:Marcus1999} the main facts on group graded Morita equivalences, and we state a graded variant of the second Morita Theorem \cite[Theorem 12.12]{book:Faith1973}. In Section \S\ref{s:Centralizers} we show that there is a natural map, compatible with Morita equivalences, from the centralizer $C_A(B)$ of $B$ in $A$ to the endomorphism algebra of a $\bar G$-graded $A$-module induced from a $B$-module. In the last Section, Section \S\ref{s:Butterfly}, we prove that a Morita equivalence between the 1-components of two block extensions always lift to a graded equivalence between certain centralizer algebras. This is the main ingredient in the proof of our main result, Theorem \ref{th:butterfly}.

\section{Group graded Morita equivalences} \label{s:Group_graded_Morita_equivalences}

Let $A=\bigoplus_{\og\in\oG}A_{\og}$ and $A'=\bigoplus_{\og\in\oG}A'_{\og}$ be strongly $\oG$-graded algebras, with the 1-components $B$ and $B'$ respectively.

\begin{subsec}\normalfont  It is clear that $A\tens{\O} A'^{\mathrm{op}}$ is a $\oG\times \oG$-graded algebra. Let \[\delta(\oG):=\{(\og,\og)\mid \og\in \oG\}\] be the diagonal subgroup of $\bar G\times \bar G$, and let $\Delta$  be the diagonal subalgebra of $A\tens{\O} A'^{\mathrm{op}}$
\[\Delta:=(A\tens{\O} A'^{\mathrm{op}})_{\delta(\oG)}=\bigoplus_{\og\in\oG}A_{\bar{g}}\otimes A'_{\bar{g}^{-1}}.\]
Then $\Delta$ is a $\oG$-graded algebra, with  1-component $\Delta_1=B\tens{\O}B'^{\mathrm{op}}.$
\end{subsec}

\begin{subsec}\normalfont Let $M$ be a $(B,B')$-bimodule, or equivalently  $M$ is a $B\tens{\O}B'^{\mathrm{op}}$-module, thus a $\Delta_1$-module. Let  $M^{\ast}:=\mathrm{Hom}_B(M,B)$ be its $B$-dual. Note that if $B$ is a symmetric algebra, then we have the  isomorphism
\[M^{\ast}:=\mathrm{Hom}_B(M,B)\simeq \mathrm{Hom}_{\O}(M,\O),\] where $\mathrm{Hom}_{\O}(M,\O)$, is  the $\O$-dual of $M$.
\end{subsec}

\begin{defi} 	We say that the $\oG$-graded $(A,A')$-bimodule $\tilde{M}$ induces a $\oG$-graded Morita equivalence between $A$ and $A'$, if $\tilde{M}\tens{A'}\tilde{M}^{\ast}\cong A$ as $\oG$-graded $(A,A)$-bimodules and that $\tilde{M}^{\ast}\tens{A}\tilde{M}\cong A'$ as $\oG$-graded $(A',A')$-bimodules, where the $A$-dual $\tilde{M}^{\ast}=\text{Hom}_A(\tilde{M},A)$  of $\tilde{M}$ is a $\oG$-graded $(A',A)$-bimodule.
\end{defi}

By   \cite[Theorem 5.1.2]{book:Marcus1999}, the following statements are equivalent:
\begin{enumerate}
\item between $B$ and $B'$ we have a Morita equivalence given by the $\Delta_1$-module $M$, and $M$ extends to a $\Delta$-module.
\item $\tilde{M}:=A\tens{B}M$ is a $\oG$-graded $(A,A')$-bimodule, and $\tilde{M}^{\ast}:=A'\tens{B'}\Ms$ is a $\oG$-graded $(A',A)$-bimodule, which induce a $\oG$-graded Morita equivalence between $A$ and $A'$, given by the functors:
\[\xymatrix@C+=3cm{
		A \ar@<+.5ex>@{<-}[r]^{ \tensor*[_{A}]{\tilde{M}}{_{A'}}\tens{A'}-} & A'. \ar@<+.5ex>@{<-}[l]^{ \tensor*[_{A'}]{\tilde{M}}{_{A}^{\ast}}\tens{A}-}
	}\]
\end{enumerate}

In this case, by \cite[Lemma 1.6.3]{book:Marcus1999}, we have the natural isomorphisms of $\bar G$-graded bimodules
\[\tilde{M}:=A\tens{B}M\simeq M\tens{B'}A'\simeq ((A\tens{\O}A'^{\mathrm{op}})\tens{\Delta}M).\]

\begin{subsec}\normalfont Assume that $B$ and $B'$ are Morita equivalent. Then, by the second Morita Theorem \cite[Theorem 12.12]{book:Faith1973}, we can choose the bimodule isomorphisms
\[\varphi:M^{\ast}\tens{B}M\to B', \qquad \psi:M\tens{B'}M^{\ast}\to B.\]
such that
\[\psi(m\tens{} m^{\ast})n=m\varphi(m^{\ast}\tens{} n),\quad\forall m,n\in M,\ m^{\ast}\in M^{\ast}\]
and that
\[\varphi(m^{\ast}\tens{} m)n^{\ast}=m^{\ast}\psi(m\tens{} n^{\ast}),\quad\forall m^{\ast},n^{\ast}\in M^{\ast},\ m\in M.\]

By the surjectivity of this functions, we may choose finite sets $I$ and $J$ and the elements $m_j^{\ast},\,n_i^{\ast}\in M^{\ast}$ and $m_j,\,n_i\in M$, for all  $i\in I,\ j\in J$ such that:
\[\varphi(\sum\limits_{j\in J}{m_j^{\ast}\tens{B}m_j})=1_{B'},\qquad \psi(\sum\limits_{i\in I}{n_i\tens{B}n_i^{\ast}})=1_{B}.\]
\end{subsec}

\begin{subsec}\normalfont Assume that  $\tilde{M}$ and $\tilde{M}^{\ast}$ give a $\oG$-graded Morita equivalence between $A$ and $A'$. As above, by  \cite[Theorem 12.12]{book:Faith1973}, we can choose  the isomorphisms
\[\tilde{\varphi}:\tilde{M}^{\ast}\tens{A}\tilde{M}\to A',\qquad  \tilde{\psi}:\tilde{M}\tens{A'}\tilde{M}^{\ast}\to A\]
of $\oG$-graded bimodules  such that
\[\tilde{\psi}(\tilde{m}\otimes \tilde{m}^{\ast})\tilde{n}=\tilde{m}\tilde{\varphi}(\tilde{m}^{\ast}\otimes \tilde{n}),\quad\forall \tilde{m},\tilde{n}\in \tilde{M},\ \tilde{m}^{\ast}\in \tilde{M}^{\ast}\]
and that
\[\tilde{\varphi}(\tilde{m}^{\ast}\otimes \tilde{m})\tilde{n}^{\ast}=\tilde{m}^{\ast}\tilde{\psi}(\tilde{m}\otimes \tilde{n}^{\ast}),\quad\forall \tilde{m}^{\ast},\tilde{n}^{\ast}\in \tilde{M}^{\ast},\ \tilde{m}\in \tilde{M}.\]
Actually, $\tilde{\varphi}_1$ and $\tilde{\psi}_1$ are the same with $\varphi$ and $\psi$ from before, and are $\Delta$-linear isomorphisms. Moreover, we have that $1_A=1_B\in B$ and $1_{A'}=1_{B'}\in B'$. Henceforth, we may choose the same finite sets $I$ and $J$ and the same elements $m_j^{\ast},\,n_i^{\ast}\in M^{\ast}$ and $m_j,\,n_i\in M$, $\forall i\in I,j\in J$ such that:
\[\tilde\varphi(\sum_{j\in J}{m_j^{\ast}\tens{B}m_j})=1_{B'}, \qquad \tilde\psi(\sum_{i\in I}{n_i\tens{B}n_i^{\ast}})=1_{B}.\]
\end{subsec}


\section{Centralizers and graded endomorphism algebras} \label{s:Centralizers}

\begin{subsec}\normalfont We will assume that $A$ and $A'$ are $\bar G$-graded crossed products, although the results of this section can be generalized to strongly graded algebras.  Let $U\in B$-mod and $U'\in B'$-mod such that $U'=M^{\ast}\tens{B}U$. We denote
\[E(U):=\End(A\tens{B}U)^{\mathrm{op}},\qquad 	E(U'):=\End(A'\tens{B'}U')^{\mathrm{op}},\]
the $\oG$-graded endomorphism algebras of the modules induced from $U$ and $U'$.
\end{subsec}

We will prove  that there exists a natural $\oG$-graded algebra homomorphism between the centralizer of $B$ in $A$ and $E(U)$, compatible with $\bar G$-graded Morita equivalences.

\begin{lem} \label{prop:theta} The map
\[\theta:C_A(B)\to E(U),\qquad 	\theta(c)(a\tens{} u)=ac\tens{} u,\]
 where $c\in C_A(B)$, $a\in A$, and $u\in U$, is a  homomorphism of $\oG$-graded algebras.
\end{lem}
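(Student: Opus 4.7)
The plan is to verify in sequence that (a) $\theta(c)$ is a well-defined element of $E(U)$, (b) $\theta$ respects the algebra structure, and (c) $\theta$ is homogeneous of degree $1_{\oG}$ on each component, i.e.\ carries $C_A(B)\cap A_{\og}$ into $E(U)_{\og}$.

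For (a), I fix $c\in C_A(B)$ and consider the $\O$-bilinear map $A\times U\to A\tens{B}U$, $(a,u)\mapsto ac\tens{} u$. In order to factor this through the tensor product I have to check $B$-balance, i.e.\ that $abc\tens{} u=ac\tens{} bu=acb\tens{} u$ for every $b\in B$. This reduces to $abc=acb$, which is exactly the hypothesis $c\in C_A(B)$. The resulting $\O$-linear endomorphism $\theta(c)$ of $A\tens{B}U$ is then $A$-linear since $\theta(c)(a'a\tens{} u)=a'ac\tens{} u=a'\,\theta(c)(a\tens{} u)$, so $\theta(c)\in E(U)$.

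For (b), additivity and $\O$-linearity of $\theta$ are immediate from the formula. For multiplicativity I must remember that $E(U)=\End(A\tens{B}U)^{\mathrm{op}}$, so the product of $\theta(c)$ and $\theta(c')$ in $E(U)$ is $\theta(c')\circ\theta(c)$. Computing,
\[
(\theta(c')\circ\theta(c))(a\tens{} u)=\theta(c')(ac\tens{} u)=acc'\tens{} u=\theta(cc')(a\tens{} u),
\]
and clearly $\theta(1_A)=\mathrm{id}_{A\tens{B}U}$, so $\theta$ is a unital $\O$-algebra homomorphism.

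For (c), I first observe that $C_A(B)$ is itself a $\oG$-graded subalgebra of $A$: if $c=\sum_{\og}c_{\og}\in C_A(B)$ and $b\in B=A_1$, comparing the degree-$\og$ components of $bc=cb$ shows $bc_{\og}=c_{\og}b$, so each $c_{\og}\in C_A(B)$. Next, the $\oG$-grading on $E(U)$ is defined by $f\in E(U)_{\og}$ iff $f\bigl((A\tens{B}U)_{\bar h}\bigr)\subseteq(A\tens{B}U)_{\bar h\og}$ for all $\bar h$, where $(A\tens{B}U)_{\bar h}=A_{\bar h}\tens{B}U$. For $c\in C_A(B)\cap A_{\og}$ and $a\in A_{\bar h}$ we have $ac\in A_{\bar h\og}$, hence $\theta(c)(a\tens{} u)=ac\tens{} u\in(A\tens{B}U)_{\bar h\og}$. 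Therefore $\theta(c)\in E(U)_{\og}$, establishing that $\theta$ is a homomorphism of $\oG$-graded $\O$-algebras. The only subtle points are keeping track of the opposite convention in step (b) and of the grading convention on the endomorphism algebra in step (c); everything else is a direct calculation.
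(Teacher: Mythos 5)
Your proof is correct and follows essentially the same line as the paper: well-definedness via $c\in C_A(B)$ commuting with $B$, $A$-linearity, multiplicativity using the opposite composition, and the grading check by degree of $ac$. You add a few welcome explicit remarks (the $B$-balance phrasing, $C_A(B)$ being a graded subalgebra, unitality), but these are details the paper leaves implicit rather than a different approach.
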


\begin{proof} We first need to show that the map is well-defined.  
For $c\in C_A(B)$, $a\in A$, $b\in B$ and $u\in U$, we have:
\[\theta(c)(ab\tens{B}u)=ab\cdot c\tens{B} u=a c b\tens{B} u = a c \tens{B} bu = \theta(c)(a\tens{B}bu).\]
To show that $\theta(c)$ is $A$-linear, let $a'\in A$; we have:
	\[\theta(c)(a'a\tens{B}u)=a'ac\tens{B}u=a'(ac\tens{B}u)=a'\theta(c)(a\tens{B}u).\]
To prove that the map is a ring homomorphism, let $c,c'\in C_A(B)$; we have:
\[\begin{array}{rcl}
	(\theta(c)\cdot\theta(c'))(a\tens{B}u)&=&(\theta(c')\circ\theta(c))(a\tens{B}u)\\
	&=&\theta(c')(\theta(c)(a\tens{B}u))\\
	&=&\theta(c')(ac\tens{B}u) = acc'\tens{B}u \\
    &=&\theta(cc')(a\tens{B}u).
\end{array}		\]
Finally, we check that $\theta$ is grade-preserving. Let $a_{\og}\tens{B}u\in A_{\og}\tens{B}U$ and $c\in C_A(B)_{\bar{h}}$, where $\og,\bar{h}\in\oG$. Then the definition of $\theta$ says that
\[\theta(c)(a_{\og}\tens{B}u)=a_{\og}\cdot c\tens{B} u\in A_{\og \bar h}\tens{B}U.\]
If follows that $\theta(c)$ belongs to $E(U)_{\bar h}$. The other properties are obvious.
\end{proof}

\begin{subsec}\normalfont By \cite[Lemma 1.6.3]{book:Marcus1999} we have
\[A\tens{B}M\simeq M\tens{B'}A',\]
and we will need an explicit isomorphism between the two. We will  choose  invertible elements $u_{\bar{g}} \in U(A)\cap A_{\bar{g}}$ and $u'_{\bar{g}}\in U(A)\cap A'_{\bar{g}}$ of degree ${\bar{g}}\in\oG$. We have that an arbitrary element $a'_{\bar{g}}\in A'_{\bar{g}}$ can be written uniquely in the  form $a'_{\bar{g}}=u'_{\bar{g}}b',$ where $b'\in B'.$ The desired $\oG$-graded bimodule isomorphism is:
\[\varepsilon:M\tens{B'}A'\to A\tens{B}M\qquad  m\tens{B'} a'_{\bar{g}}\mapsto    u_{\bar{g}}\tens{B}  u^{-1}_{\bar{g}} ma'_{\bar{g}}\]
for $m\in M$. We will also need the explicit isomorphism of $\bar G$-graded bimodules
\[\beta:A'\tens{B'} M^{\ast}\to M^{\ast}\tens{B}A\qquad  a'_{\bar{g}}\tens{B'} m^{\ast} \mapsto a'_{\bar{g}}m^*u_{\bar{g}}^{-1}\otimes_B u_{\bar{g}} \]
for $m^{\ast}\in M^{\ast}$. Henceforth we consider the  isomorphism of $\bar G$-graded $A'$-modules
\[\beta\tens{B}id_U:A'\tens{B'} M^{\ast}\tens{B}U\to M^{\ast}\tens{B}A\tens{B} U.\]
\end{subsec}

\begin{prop} 	Assume that $\tilde M$ and $\tilde{M}^{\ast}$ give a $\oG$-graded Morita equivalence between $A$ and $A'$. Then the diagram
\[\xymatrix@C+=3cm{
		E(U) \ar@{->}[r]^{\varphi_1}_{\sim} & E(U')\\
		C_A(B) \ar@{->}[r]^{\varphi_2}_{\sim} \ar@{->}[u]^{\theta} & C_{A'}(B') \ar@{->}[u]^{\theta'}  
}\]
is commutative, where the maps are defined as follows:
\begin{align*}
\theta(c)(a\tens{} u)&=ac\tens{} u, \\
 \theta'(c')(a'\tens{} u')&= a'c'\tens{} u' \\
 \varphi_1(f) &=(\beta\tens{B}id_U)^{-1}\circ(id_{\tilde{M}^{\ast}}\otimes f)\circ (\beta\tens{B}id_U), \\
 \varphi_2(c) &=\tilde{\varphi}(\sum\limits_{j\in J}{m_j^{\ast}c\tens{B}m_j}).
\end{align*}
for all  $a\in A,$ $a'\in A',$ $c\in C_A(B)$, $c'\in C_{A'}(B'),$  $u\in U$, $u'\in U'$ and $f\in E(U)$.
\end{prop}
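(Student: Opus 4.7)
The plan is to verify $\varphi_1(\theta(c)) = \theta'(\varphi_2(c))$ element-wise on a generic tensor $a'_{\og} \tens{B'} m^* \tens{B} u \in A' \tens{B'} M^* \tens{B} U$, reducing by linearity to a homogeneous $c = c_{\bar{h}} \in C_A(B) \cap A_{\bar{h}}$. Following the three arrows that define $\varphi_1$, I would first apply $\beta \tens{B} id_U$ to produce $a'_{\og} m^* u_{\og}^{-1} \tens{B} u_{\og} \tens{B} u$; next $id_{M^*} \otimes \theta(c_{\bar{h}})$ right-multiplies the middle $A$-factor by $c_{\bar{h}}$, giving $a'_{\og} m^* u_{\og}^{-1} \tens{B} u_{\og} c_{\bar{h}} \tens{B} u$; finally, writing $u_{\og} c_{\bar{h}} = u_{\og\bar{h}} b$ with $b \in B$, the inverse $(\beta \tens{B} id_U)^{-1}$ produces $u'_{\og\bar{h}} \tens{B'} (u'_{\og\bar{h}})^{-1} a'_{\og} m^* c_{\bar{h}} \tens{B} u$, which when viewed in $\tilde{M}^* \tens{B} U$ via $A' \tens{B'} M^* \cong \tilde{M}^*$ collapses to $a'_{\og} m^* c_{\bar{h}} \tens{B} u$ after absorbing $u'_{\og\bar{h}} (u'_{\og\bar{h}})^{-1} = 1_{A'}$. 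On the other side, $\theta'(\varphi_2(c_{\bar{h}}))(a'_{\og} \tens{B'} m^* \tens{B} u) = a'_{\og} \varphi_2(c_{\bar{h}}) \tens{B'} m^* \tens{B} u$, which in $\tilde{M}^* \tens{B} U$ becomes $a'_{\og} \varphi_2(c_{\bar{h}}) m^* \tens{B} u$. Equality of the two endomorphisms therefore reduces to the key identity
\[\varphi_2(c) \cdot m^* = m^* \cdot c \qquad \text{in } \tilde{M}^*,\]
for all $c \in C_A(B)$ and $m^* \in M^*$, where the left-hand side uses the left $A'$-action and the right-hand side uses the right $A$-action on $\tilde{M}^*$.

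To establish this identity, I would unpack $\varphi_2(c) \cdot m^* = \tilde{\varphi}(\sum_j m_j^* c \tens{A} m_j) \cdot m^*$ and apply the compatibility $\tilde{\varphi}(\tilde{m}^* \tens{A} \tilde{m}) \tilde{n}^* = \tilde{m}^* \tilde{\psi}(\tilde{m} \tens{A'} \tilde{n}^*)$, obtaining $\sum_j m_j^* c \cdot \tilde{\psi}(m_j \tens{A'} m^*) = \sum_j m_j^* c \cdot \psi(m_j \tens{B'} m^*)$, where the second equality uses $\tilde{\psi}_1 = \psi$ on the degree-one component. Since $\psi(m_j \tens{B'} m^*) \in B$ commutes with $c \in C_A(B)$, this equals $\sum_j m_j^* \psi(m_j \tens{B'} m^*) \cdot c$. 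The dual compatibility $\varphi(m^* \tens{B} m) n^* = m^* \psi(m \tens{B'} n^*)$, applied with $m^* = m_j^*$, $m = m_j$, $n^* = m^*$ and summed over $j$, yields $\sum_j m_j^* \psi(m_j \tens{B'} m^*) = \varphi(\sum_j m_j^* \tens{B} m_j) \cdot m^* = 1_{B'} \cdot m^* = m^*$, so the sum collapses to $m^* \cdot c$ as required.

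The main obstacle is the careful bookkeeping of the two realizations $\tilde{M}^* \cong A' \tens{B'} M^*$ and $\tilde{M}^* \cong M^* \tens{B} A$ afforded by $\beta$, together with the several actions on $M^*$ and $\tilde{M}^*$: one must consistently interpret each expression ``$a' m^* a$'' either through the $\Delta$-action on $M^*$ (when the degrees balance to $1$) or through the $(A',A)$-bimodule structure on $\tilde{M}^*$, and must track how elements of $B$ and $B'$ slide across the various tensors. Once this is sorted out, commutativity of the diagram reduces to the key identity, which is itself a clean consequence of the second Morita Theorem compatibilities between $\tilde{\varphi}$, $\tilde{\psi}$ and their degree-one restrictions $\varphi$, $\psi$.
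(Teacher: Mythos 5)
Your proof is correct and follows essentially the same route as the paper: both apply the compatibility relations $\tilde{\varphi}(\tilde{m}^*\otimes\tilde{m})\tilde{n}^*=\tilde{m}^*\tilde{\psi}(\tilde{m}\otimes\tilde{n}^*)$, the centralizer property $c\,\psi(m_j\otimes m^*)=\psi(m_j\otimes m^*)\,c$, and $\varphi(\sum_j m_j^*\otimes m_j)=1_{B'}$ in the same sequence. The only difference is organizational: you extract the clean intermediate identity $\varphi_2(c)\cdot m^*=m^*\cdot c$ in $\tilde{M}^*$ before comparing the two sides, whereas the paper runs the same chain of equalities inline with the $u_{\og}$, $u_{\bar{h}}$ conjugation factors left visible throughout.
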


\begin{proof} According to Lemma \ref{prop:theta}, we have that $\theta,\theta'$ are homomorphisms of $\oG$-graded algebras. Moreover,  $\varphi_1$ and $\varphi_2$ are the algebra isomorphisms induced by the $\bar G$-graded Morita equivalence.

To prove that the diagram is commutative, let $c\in C_A(B)_{\bar{h}}$, where $\bar{h}\in \bar G$. We consider arbitrary elements $a'_{\bar{g}}\in A'_{\bar{g}}$, where $\bar{g}\in\bar{G}$ and $u'=m^{\ast}\tens{B} u\in U'=M^{\ast}\tens{B}U$. By the above remarks, for all $f\in E(U)$, we have
\[\varphi_1(f)(a'_{\bar{g}}\otimes_{B'}m^*\otimes_B u)=a'_{\bar{g}}m^*u^{-1}_{\bar{g}}\otimes_Bf(u_{\bar{g}}\otimes_B u),\]
hence, for $f=\theta(c)\in E(U)$ we get
\[\varphi_1(\theta(c))(a'_{\bar{g}}\otimes_{B'}m^*\otimes_B u)=a'_{\bar{g}}m^*u^{-1}_{\bar{g}}\otimes_B u_{\bar{g}}c\otimes_B u.\]
On the other hand, $c':=\varphi_2(c)\in C_{A'}(B')_h$, hence, via the identification given by the isomorphism $\beta$, we have
\begin{align*} \theta'(\varphi_2(c))&(a'_{\bar{g}}\otimes_{B'}m^*\otimes_Bu) = a'_{\bar{g}}c'm^*u_{\bar{h}}^{-1}u_{\bar{g}}^{-1}\otimes_B u_{\bar{g}}u_{\bar{h}}\otimes_B u \\
   &= a'_{\bar{g}}\tilde\varphi(\sum_jm^*_jc\otimes_B m_j)   m^*u_{\bar{h}}^{-1}u_{\bar{g}}^{-1}\otimes_B u_{\bar{g}}u_{\bar{h}}\otimes_B u \\
   &= a'_{\bar{g}}  \sum_jm^*_jc \psi( m_j\otimes_{B'} m^*)   u_{\bar{h}}^{-1}u_{\bar{g}}^{-1}\otimes_B u_{\bar{g}}u_{\bar{h}}\otimes_B u \\
   &= a'_{\bar{g}}  \sum_j m^*_j \psi( m_j\otimes_{B'} m^*) u_{\bar{g}}^{-1}u_{\bar{g}} c u_{\bar{h}}^{-1}u_{\bar{g}}^{-1}\otimes_B u_{\bar{g}}u_{\bar{h}}\otimes_B u \\
   &= a'_{\bar{g}} \varphi( \sum_j m^*_j\otimes_B m_j) m^* u_{\bar{g}}^{-1}\otimes_B u_{\bar{g}} c u_{\bar{h}}^{-1}u_{\bar{g}}^{-1} u_{\bar{g}}u_{\bar{h}}\otimes_B u \\
   &= a'_{\bar{g}}  m^* u_{\bar{g}}^{-1}\otimes_B u_{\bar{g}} c \otimes_B u.
\end{align*}
Thus the statement is proved.
\end{proof}

\section{The butterfly theorem for \texorpdfstring{$\bar G$-}{group }graded Morita equivalences} \label{s:Butterfly}

\begin{subsec}\normalfont Let $N$ be a normal subgroup of $G$, $G'$ a subgroup of $G$, and $N'$ a normal subgroup of $G'$. We assume that $N'=G'\cap N$ and $G=G'N$, hence $\oG:=G/N\simeq G'/N'$.  Let $b\in Z(\O N)$ and $b'\in Z(\O N')$ be $\oG$-invariant block idempotents. We denote
\[ A:=b\O G, \qquad A':=b'\O G',\qquad B:=b\O N, \qquad B': = b'\O N'. \]
Then $A$ and $A'$ are strongly $\oG$-graded algebras, with  1-compo\-nents $B$ and $B'$ respectively.

Additionally, assume that $C_G(N)\subseteq G'$, and  denote $\bar{C}_G(N):=NC_G(N)/N.$ We consider the algebras
\[\xymatrix@C+=3cm{
	A:=b\O G & A':=b'\O G'\\
	C:=b\O NC_G(N) \ar@{-}[r]_{\sim} \ar@{-}[u] & C':=b'\O N'C_G(N) \ar@{-}[u]\\
	B:=b\O N \ar@{-}[r]^{\tensor*[_{B}^{}]{M}{_{B'}^{}}}_{\sim} \ar@{-}[u] & B':=b'\O N'. \ar@{-}[u]
}\]
\end{subsec}

If $M$ induces a Morita equivalence between $B$ and $B'$, the question that arises, is what can we deduce without the additional hypothesis that $M$ extends to a $\Delta$-module. One answer is given by the following proposition.

\begin{prop} \label{prop:extension_to_C} Assume that:
\begin{enumerate}
\item  $C_G(N)\subseteq G'$.
\item  $M$ induces a Morita equivalence between $B$ and $B'$.
\item  $zm=mz$ for all $m\in M$ and $z\in Z(N)$.
\end{enumerate}
Then there is a $\bar{C}_G(N)$-graded Morita equivalence between $C$ and $C'$, induced by the $\bar{C}_G(N)$-graded $(C,C')$-bimodule
\[C\tens{B}M\simeq M\tens{B'}C'\simeq (C\tens{} C'^{\mathrm{op}})\tens{\Delta(C\tens{}C'^{\mathrm{op}})}M.\]
\end{prop}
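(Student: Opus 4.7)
The plan is to apply the equivalence of conditions (1) and (2) recalled after the definition of $\oG$-graded Morita equivalence in Section \S\ref{s:Group_graded_Morita_equivalences}: since hypothesis (2) already supplies the ordinary Morita equivalence between $B$ and $B'$, what remains is to extend $M$ to a module over the diagonal subalgebra
\[\Delta(C\tens{\O}C'^{\mathrm{op}})=\bigoplus_{\og\in\bar{C}_G(N)}C_{\og}\tens{\O}C'_{\og^{-1}}.\]
A harmless preliminary is that $C_G(N)\cap N=C_G(N)\cap N'=Z(N)$, using $C_G(N)\subseteq G'$ and $N'=G'\cap N$; in particular $C$ and $C'$ are graded by the same group, via $\bar{C}_G(N)\cong C_G(N)/Z(N)\cong N'C_G(N)/N'$.

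For the extension, I would fix a lift $u_{\og}\in C_G(N)$ of each $\og\in\bar{C}_G(N)$, with $u_1=1$. Since $u_{\og}$ centralises $N$ and $N'$ we have $C_{\og}=Bu_{\og}=u_{\og}B$ and $C'_{\og^{-1}}=u_{\og}^{-1}B'=B'u_{\og}^{-1}$, and the natural candidate for the $\og$-component of the action is the $\O$-linear extension of
\[(bu_{\og}\tens{}u_{\og}^{-1}b')\cdot m := bmb',\qquad b\in B,\ b'\in B',\ m\in M,\]
which restores the original $\Delta_1$-action at $\og=1$. Two things must then be checked: (i) independence of the chosen lift $u_{\og}$, and (ii) compatibility with the multiplication in $\Delta$.

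Both checks reduce to the same cocycle-killing observation and constitute the only real work in the proof. Replacing $u_{\og}$ by $u_{\og}z$ with $z\in Z(N)$, or multiplying two lifts as $u_{\og}u_{\bar h}=u_{\og\bar h}z$ with $z=u_{\og\bar h}^{-1}u_{\og}u_{\bar h}\in C_G(N)\cap N=Z(N)$, each produces an extra central element $z\in Z(N)$ that must be moved past $m$; hypothesis (3), namely $zm=mz$, is exactly what absorbs this obstruction in both situations. Once (i) and (ii) are in place, $M$ is a $\Delta(C\tens{\O}C'^{\mathrm{op}})$-module and the criterion of Section \S\ref{s:Group_graded_Morita_equivalences} delivers the desired $\bar{C}_G(N)$-graded Morita equivalence between $C$ and $C'$; the natural bimodule isomorphisms displayed in the statement then come from \cite[Lemma 1.6.3]{book:Marcus1999}. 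The main obstacle is precisely that the lifts $u_{\og}$ are only canonical modulo $Z(N)$, and condition (3) is essentially the minimal hypothesis making this $Z(N)$-ambiguity invisible on $M$.
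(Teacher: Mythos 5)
Your proof is correct, and it takes a genuinely more direct route than the paper's. Where the paper constructs the $\bar{C}_G(N)$-graded $(C,C')$-bimodule structure on $\hat M:=C\tens{B}M$ indirectly --- truncating the $\bar G$-graded $(A,C_A(B))$-bimodule structure on $A\tens{B}M$ coming from the map $\theta:C_A(B)\to E(M)$ of Lemma \ref{prop:theta}, restricting along the algebra map $\O C_G(N)\to C_A(B)_{\bar C_G(N)}$, and then splicing this together with the right $B'$-action via the formula $(c\otimes m)c'n:=cc'\otimes mn$ --- you instead extend $M$ directly to a $\Delta(C\tens{\O}C'^{\mathrm{op}})$-module by picking lifts $u_{\og}\in C_G(N)$ and setting $(bu_{\og}\otimes u_{\og}^{-1}b')\cdot m:=bmb'$. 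The two constructions are equivalent via \cite[Theorem 5.1.2]{book:Marcus1999}, and both invoke hypothesis (3) at exactly the point where an element of $C_G(N)\cap N=Z(N)$ must be moved past $m$: in your argument this is the cocycle $z=u_{\og\bar h}^{-1}u_{\og}u_{\bar h}\in Z(N)$, and in the paper's it is the verification that $(c\otimes m)(c'z)(z^{-1}n)=(c\otimes m)c'n$. Your version has the virtue of being self-contained (it does not rely on Section \ref{s:Centralizers}) and of making the precise role of hypothesis (3) --- killing the $Z(N)$-valued obstruction cocycle --- completely transparent; the paper's version fits more naturally with the endomorphism-algebra viewpoint that is used again in Theorem \ref{th:butterfly}. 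One small remark: strictly speaking your check (i) of independence of the lift is not logically required once the $u_{\og}$ are fixed (the expression $bu_{\og}$ with $b\in B$ is unique), but it is the same computation as (ii) and it is good practice to record it.
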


\begin{proof} Firstly, it is easy to see that our assumption implies that $NC_G(N)/N$ is isomorphic to $N'C_G(N)/N'$.  Thus both $C$ and $C'$ are indeed strongly $\bar{C}_G(N)$-graded algebras.
	
Now, we prove that there is a $\bar{C}_G(N)$-graded Morita equivalence between $C$ and $C'$.  It suffices to prove that $C\tens{B}M$ is actually a $\bar{C}_G(N)$-graded $(C,C')$-bimodule.
	
By Lemma $\ref{prop:theta}$, there exists a $\oG$-graded algebra homomorphism between $C_A(B)$ and $\text{End}_A(A\tens{B}M)^{\mathrm{op}}$. Moreover, note that $A\tens{B}M$ is a $\oG$-graded $(A,\text{End}_A(A\tens{B}M)^{\mathrm{op}})$-bimodule, hence by restricting the scalars we obtain that $A\tens{B}M$ is a $\oG$-graded $(A,C_A(B))$-bimodule. We truncate to the subgroup $\bar{C}_G(N)$ of $\oG$, and we obtain that $A_{\bar{C}_G(N)}\tens{B}M$ is a $\bar{C}_G(N)$-graded $(A_{\bar{C}_G(N)},C_A(B)_{\bar{C}_G(N)})$-bimodule, but $A_{\bar{C}_G(N)}=b\O NC_G(N)=C$, hence $\hat{M}:=C\tens{B}M$ is a $\bar{C}_G(N)$-graded $(C,C_A(B)_{\bar{C}_G(N)})$-bimodule.
	
We have  that $\O C_G(N)$ is $\bar{C}_G(N)$-graded with the 1-component $\O Z(N)$, and there is an algebra homomorphism from $\O C_G(N)$ to $C_A(B)$, whose image is evidently included in  $C_A(B)_{\bar{C}_G(N)}$. Hence, by restricting the scalars we obtain that $\hat{M}$ is a $\bar{C}_G(N)$-graded $(C,\O C_G(N))$-bimodule. Finally, since $M$ is $(B,B')$-bimodule, where $B'=b'\O N'$, we may define on $\hat{M}$ a structure of a $\bar{C}_G(N)$-graded $(C,b'\O N'C_G(N))$-bimodule, as follows. Let $c\in C$, $m\in M$, $c'\in C_G(N)\subseteq C'$ and $n\in N$, and define
\[(c\otimes m)c'n=cc'\otimes mn.\]
To see that this is well-defined, let $z\in Z(N)$, so $c'n=(c'z)(z^{-1}n)$. Then, by assumption (3), we have
\[(c\otimes m)(c'z)(z^{-1}n)=cc'z\otimes mz^{-1}n=cc'\otimes zmz^{-1}n=cc'\otimes mn.\]
Consequently, $\hat{M}$ is a $\bar{C}_G(N)$-graded $(C,C')$-bimodule.
\end{proof}

Our main result is a version for Morita equivalences of the so-called ``butterfly theorem" \cite[Theorem 2.16]{ch:Spath2018}.

\begin{theorem} \label{th:butterfly} Let $\hat G$ be another group with normal subgroup $N$, such that the block $b$ is also $\hat G$-invariant. Assume that:
\begin{enumerate}
\item $C_G(N)\subseteq G'$,
\item $\tilde M$ induces  a $\oG$-graded Morita equivalence between $A$ and $A'$;
\item $zm=mz$ for all $m\in M$ and $z\in Z(N)$.
\item the conjugation maps $\varepsilon:G\to \text{Aut}(N)$ and $\hat{\varepsilon}:\hat{G}\to \mathrm{Aut}(N)$ satisfy $\varepsilon(G)=\hat{\varepsilon}(\hat{G})$.
\end{enumerate}
Denote $\hat{G}'=\hat{\varepsilon}^{-1}(\varepsilon(G'))$. Then there is a $\hat G/N$-graded Morita equivalence between $\hat{A}:=b\O \hat{G}$ and $\hat{A}':=b'\O \hat{G}'$.

\end{theorem}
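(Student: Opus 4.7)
The plan is to introduce a ``butterfly'' group $L$ that simultaneously covers $G$ and $\hat G$, lift the given Morita equivalence to a $\bar L$-graded equivalence between $b\O L$ and $b'\O L'$, and then descend it to the $\hat G$-level. Set
\[L:=\{(g,\hat g)\in G\times \hat G\mid \varepsilon(g)=\hat\varepsilon(\hat g)\},\qquad L':=L\cap (G'\times \hat G'),\]
and embed $N$ diagonally in both. Hypothesis (4) makes both projections $\pi:L\to G$ and $\hat\pi:L\to \hat G$ surjective, with respective kernels $\{1\}\times C_{\hat G}(N)$ and $C_G(N)\times\{1\}$, which both lie in $C_L(N)$. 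Since $C_G(N)\subseteq G'$ by hypothesis (1) and $C_{\hat G}(N)\subseteq \hat G'$ by the definition of $\hat G'$, these projections restrict to surjections $L'\to G'$ and $L'\to \hat G'$ with the same kernels. A short check gives $N\cap L'=N'$ and $L/N\simeq L'/N'=:\bar L$; moreover $b$ is $L$-invariant and $b'$ is $L'$-invariant since $L$ acts on $N$ through $\varepsilon(G)=\hat\varepsilon(\hat G)$. Hence $\tilde A:=b\O L$ and $\tilde A':=b'\O L'$ are $\bar L$-graded crossed products with $1$-components $B$ and $B'$.

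The heart of the argument is to lift the $\bar G$-graded Morita equivalence $\tilde M$ between $A$ and $A'$ to a $\bar L$-graded Morita equivalence between $\tilde A$ and $\tilde A'$ extending $M$. The hypotheses of Proposition \ref{prop:extension_to_C} transfer verbatim from $(G,G')$ to $(L,L')$: $C_L(N)=C_G(N)\times C_{\hat G}(N)\subseteq L'$; the Morita bimodule $M$ between $B$ and $B'$ is unchanged; and condition (3) is intrinsic to $M$. So Proposition \ref{prop:extension_to_C}, applied to $(L,L')$, yields a $\bar C_L(N)$-graded extension of $M$ to a Morita equivalence between $b\O NC_L(N)$ and $b'\O N'C_L(N)$. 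Since $\bar L$ is generated by the image of $\bar G$ (via $\pi$) together with its normal subgroup $\bar C_L(N)$, with $\bar L/\bar C_L(N)\simeq \bar G/\bar C_G(N)$, and since the two partial extensions (the $\bar G$-graded one from hypothesis (2) and the $\bar C_L(N)$-graded one just obtained) agree on the overlap $\bar C_G(N)$, they glue into a $\bar L$-graded $(\tilde A,\tilde A')$-bimodule $\tilde M^L$ with $1$-component $M$; condition (3) is precisely what ensures the resulting bimodule action is well-defined.

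For the descent, the kernel $K_G:=C_G(N)\times\{1\}$ of $\hat\pi:L\to \hat G$ lies in $C_L(N)\cap L'$ and, by the construction of $\tilde M^L$, acts on $\tilde M^L$ the same way from the left (through $\tilde A$) as from the right (through $\tilde A'$). Consequently, $K_G$ acts trivially on the balanced quotient $\hat M:=\tilde M^L\tens{\O K_G}\O$, which inherits the structure of a $(\hat A,\hat A')$-bimodule because $\hat A\simeq \tilde A\tens{\O K_G}\O$ and $\hat A'\simeq \tilde A'\tens{\O K_G}\O$ via $\hat\pi$ and $\hat\pi|_{L'}$. The $\bar L$-grading descends to a $\bar L/(K_GN/N)\simeq \hat G/N$-grading on $\hat M$, and the Morita isomorphisms $\tilde M^L\tens{\tilde A'}(\tilde M^L)^{\ast}\simeq \tilde A$ and $(\tilde M^L)^{\ast}\tens{\tilde A}\tilde M^L\simeq \tilde A'$ descend to the corresponding isomorphisms for $\hat M$, yielding the desired $\hat G/N$-graded Morita equivalence.

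The main obstacle is the gluing in the second step: verifying that the two partial extensions (from hypothesis (2) and from Proposition \ref{prop:extension_to_C} applied to $(L,L')$) really agree on the overlap $\bar C_G(N)$, so that they assemble into a bona fide $\bar L$-graded bimodule. This is a cocycle-type compatibility check that ultimately rests on condition (3) together with the explicit Morita computations of Section \ref{s:Centralizers}.
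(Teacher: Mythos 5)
Your proposal is a genuinely different route from the paper's. The paper makes no detour through a ``butterfly group'': it applies Proposition \ref{prop:extension_to_C} directly to $(\hat G,\hat G')$ to get a $\bar C_{\hat G}(N)$-graded equivalence, chooses a transversal $\T\subseteq G'$ of $NC_G(N)$ in $G$ and matches it with a transversal $\hat\T\subseteq\hat G'$ via $\varepsilon(t)=\hat\varepsilon(\hat t)$, and then defines the $\hat\Delta$-action on $M$ explicitly by $(\hat t\otimes\hat t^\circ)\cdot m:=(t\otimes t^\circ)\cdot m$ (with a cohomological variant using $H^2(\cdot,Z(B)^\times)$ as a safety net). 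Your approach, via the fiber product $L=\{(g,\hat g)\mid\varepsilon(g)=\hat\varepsilon(\hat g)\}$ with $N$ embedded diagonally, is the classical ``butterfly'' shape and is conceptually attractive; the group-theoretic bookkeeping you carry out ($C_L(N)=C_G(N)\times C_{\hat G}(N)\subseteq L'$, $N\cap L'=N'$, the two surjections $\pi,\hat\pi$ with kernels in $C_L(N)$) is correct.

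However, there are two places where the proposal, as written, does not hold up. First, the gluing step is not set up correctly: since $\pi:L\to G$ is a \emph{surjection}, $\bar G=G/N$ is a \emph{quotient} of $\bar L=L/N$, not a subgroup, so the phrase ``$\bar L$ is generated by the image of $\bar G$ (via $\pi$) together with $\bar C_L(N)$'' has no meaning, and there is no ``overlap $\bar C_G(N)$'' in the sense of a union of subgroups. What you actually have are two structures of quite different type: restriction of the $\Delta(A\otimes A'^{\mathrm{op}})$-module $M$ along the algebra surjection $\Delta(\tilde A\otimes\tilde A'^{\mathrm{op}})\to\Delta(A\otimes A'^{\mathrm{op}})$, which already gives a \emph{full} $\bar L$-graded extension, versus the partial $\bar C_L(N)$-graded extension from Proposition \ref{prop:extension_to_C}. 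These need not agree on $\bar C_L(N)$; the pulled-back action of $(c,1)\otimes((c,1)^{-1})^{\mathrm{op}}$ for $c\in C_G(N)$ is conjugation by $c$ on $M$ (through the given $\Delta$-module structure), which can be nontrivial, whereas the Proposition \ref{prop:extension_to_C} action of the same element is the identity by construction. Reconciling the two is a genuine cohomological lifting problem (modifying the $\Delta_L$-cocycle by a coboundary supported on $\bar C_L(N)$ so that $K_G$ acts trivially while the class stays trivial), not a formal gluing, and that is exactly the content of the theorem -- so deferring it as ``a routine cocycle check'' leaves the main point unproved. Second, the descent $\hat M:=\tilde M^L\tens{\O K_G}\O$ is not quite the right construction: $\hat A\cong\tilde A\tens{\O K_G}\O$ needs to be interpreted carefully (it is $b\O L/\ker\hat\pi$, with $\ker\hat\pi$ a two-sided ideal), and the diagonal $\hat\Delta$ is not a quotient of $\Delta_L$ -- the preimage of $\delta(\hat G/N)$ in $\bar L\times\bar L$ is strictly larger than $\delta(\bar L)$ -- so passing from a $\Delta_L$-module to a $\hat\Delta$-module requires precisely the ``$K_G$ acts trivially'' property whose verification was deferred in step one. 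In short, the butterfly-group framing is a reasonable alternative scaffold, but the proposal relocates the real difficulty to the gluing and descent and then does not close either gap; the paper's direct definition via matched transversals (or its $H^2$ argument) is what actually does the work.
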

\begin{proof}
Consider the following diagram:
$$
\xymatrix@C+=1cm{
	\hat{A}:=b\O \hat{G} & A:=b\O G \ar@{-}[r]^{\tilde{M}}_{\sim} & A':=b'\O G' & \hat{A}':=b'\O \hat{G}'\\
	b\O NC_{\hat{G}}(N) \ar@{-}[u] & b\O NC_{G}(N) \ar@{-}[u] \ar@{-}[r]_{\sim}& b'\O N'C_{G}(N) \ar@{-}[u] & b'\O N'C_{\hat{G}}(N) \ar@{-}[u]\\
	 &B:=\O N b \ar@{-}[u] \ar@{-}[r]^{M}_{\sim} \ar@{-}[ul]& B':=\O N' b'. \ar@{-}[u] \ar@{-}[ur]&
}
$$

By the proof of \cite[Theorem 2.16]{ch:Spath2018}, we have that
$C_{\hat{G}}(N)\leq \hat{G}'$, $\hat{G}= N \hat{G}'$ and $N'=N\cap \hat{G}'$.
Note that $NC_G(N)$ is the kernel of the map $G\to\text{Out}(N)$ induced by conjugation. Hence, the hypothesis $\varepsilon(G)=\hat{\varepsilon}(\hat{G})$ implies that $G/NC_G(N)\simeq \hat{G}/NC_{\hat{G}}(N)$. It follows that $\oG/\bar{C}_G(N)\simeq \bar{\hat{G}}/\bar{C}_{\hat{G}}(N)$.

Let $C$ and $C'$ as in Proposition \ref{prop:extension_to_C} and denote $\hat{C}=b\O N C_{\hat{G}}(N)$ and $\hat{C}'=b'\O N'C_{\hat{G}'}(N)$.
By Proposition \ref{prop:extension_to_C}, we know that the Morita equivalence between $B$ and $B'$ induced by $M$ extends to a $\bar{C}_{\hat{G}}(N)$-graded Morita equivalence between $\hat{C}$ and $\hat{C}'$, induced by $\hat{C}\tens{B}M$.

Let $\T\subseteq G'$ be a complete set of representatives for the cosets of $N'C_G(N)$ in $G'$. Because $G=NG'$, $\T$ is a complete set of representatives for the cosets of $NC_G(N)$ in $G$.

For any $t\in\T$, we choose $\hat{t}\in \hat{G}'$ such that $\varepsilon(t)=\hat{\varepsilon}(\hat{t})$. Thus, we obtain a complete set $\hat{\T}$ of representatives of $N'C_{\hat{G}}(N)$ in $\hat{G}'$, so $\hat{\T}$ is also a complete set of representatives for the cosets of $NC_{\hat{G}}(N)$ in $\hat{G}$.

We need to define a $\hat\Delta:=\Delta(\hat{A}\otimes \hat{A}'^{\mathrm{op}})$-module structure on $M$, knowing that $M$ is $\Delta(A\otimes A'^{\mathrm{op}})$-module and a $\Delta(\hat{A}_{\bar{C}_{\hat{G}}(N)}\otimes \hat{A}_{\bar{C}_{\hat{G}}(N)}'^{\mathrm{op}})$-module, where
\[\Delta(\hat{A}_{\bar{C}_{\hat{G}}(N)}\otimes \hat{A}_{\bar{C}_{\hat{G}}(N)}'^{\mathrm{op}})\simeq \Delta(\hat{A}\otimes \hat{A}'^{\mathrm{op}})_{\bar{C}_{\hat{G}}(N)}.\]

We define $(\hat{t}\otimes \hat{t}^\circ)\cdot m=(t\otimes t^\circ)\cdot m$. It is a routine to verify that this definition does not depend on the choices we made, and that it gives the required $\hat\Delta$-module structure on $M$.

Alternatively, one may argue as follows: The cohomology class $[\hat\alpha]$ from $ H^2(\hat G/N, Z(B)^\times)$ associated to the $\hat\Delta_1$-module $M$ satisfies $\mathrm{Res}^{\hat G/N}_{\bar C_{\hat G}(N)}[\hat\alpha]=1$, because $M$ extends to a $\hat \Delta_{\bar{C}_{\hat{G}}(N)}$-module. It follows that $[\hat \alpha]\in\mathrm{Im}\mathrm{Inf}^{\hat G}_{NC_{\hat G}(N)}$. On the other hand, the class $[\alpha]\in H^2(\bar G, Z(B)^\times)$ associated to the $\Delta_1$-module $M$ is trivial, since $M$ extends to a $\Delta$-module. It is easy to see that
\[(t\otimes t^\circ)\otimes M \simeq (\hat t\otimes \hat t^\circ)\otimes M\] as $(B,B)$-bimodules, and since $G/NC_G(N)\simeq \hat G/NC_{\hat G}(N)$, we deduce that $[\hat\alpha]$ is also trivial, hence $M$ extends to a $\hat\Delta$-module.
\end{proof}


\vspace{10pt}

\hspace{-4mm}{\small{}}\\

\vspace{-25.3pt}
\ \hfill \
\begin{tabular}{c}
{\small\em Babe\cb{s}-Bolyai University} \\
{\small\em Department of Mathematics}\\
{\small\em Str. Mihail Kog\u alniceanu nr. 1}\\
{\small\em  400084 Cluj-Napoca, Romania}\\
{\small\em E-mail: {\tt marcus@math.ubbcluj.ro}} \\
{\small\em E-mail: {\tt minuta.aurelian@math.ubbcluj.ro}}
\end{tabular}

\end{document}